\newtheorem{thm}{Theorem}
\newtheorem{lem}[thm]{Lemma}
\newtheorem{prop}[thm]{Proposition}
\theoremstyle{definition}
\newtheorem{rmk}[thm]{Remark}
\newtheorem{ques}[thm]{Question}
\newcommand{\CM}{\mathcal{M}}
\title[The existence of a mini. indecomp. genus-2 LF]
{The existence of an indecomposable minimal genus two Lefschetz fibration} 
\begin{document}

\author{Anar Akhmedov}
\address{School of Mathematics, University of Minnesota, Minneapolis, MN, 55455, USA}
\email{akhmedov@math.umn.edu}

\author[N. Monden]{Naoyuki Monden}
\address{Department of Mathematics, Faculty of Science, Okayama University, Okayama 700-8530, Japan}
\email{n-monden@okayama-u.ac.jp}




\begin{abstract} 
It was shown by Usher that any fiber sum of Lefschetz fibrations over $S^2$ is minimal, which was conjectured by Stipsicz. 
We prove that the converse does not hold by showing that there exists an indecomposable minimal genus-2 Lefschetz fibration. 
\end{abstract}

\maketitle

\section{Introduction} Lefschetz fibrations play an important role in 4-manifold topology.  It was shown by Donaldson that, after some blow-ups, any closed symplectic 4-manifold admit a Lefschetz fibration \cite{D1} over $\mathbb{S}^2$. Conversely, Gompf showed that the total space of a Lefschetz fibration admits a symplectic structure, provided the fibers are non-trivial in homology \cite{GS}, generalizing an earlier work of Thurston in \cite{Th}.

For a closed, connected, oriented smooth $4$-manifold $X$, a smooth map $f:X\to \mathbb{S}^2$ is called a genus-$g$ \textit{Lefschetz fibration} if a regular fiber of $f$ is diffeomorphic to a closed oriented surface $\Sigma_g$ of genus $g$ and for each critical point $p$ and $f(p)$ there are complex local coordinate charts agreeing with the orientations of $X$ and $\mathbb{S}^2$ on which is of the form $f(z_1,z_2) = z_1z_2$. 
We suppose that $f$ is injective on the set of critical points $C$ and \textit{relatively minimal}, i.e., no fiber contains a $(-1)$-sphere. We say that $f$ is \textit{minimal} if its total space $X$ is (symplectically) minimal.

The \textit{fiber sum} is one of important and natural operation to construct new Lefschetz fibrations. 
For $i = 1,2$, let $f_i:X_i \to \mathbb{S}^2$ be two genus-$g$ Lefschetz fibrations. 
We remove a fibered neighborhood of a regular fiber $F_i$ from each fibration and glue the resulting 4-manifolds along their boundaries using a fiber-preserving and orientation-reversing diffeomorphism $\phi:F_1\times \mathbb{S}^1 \to F_2\times \mathbb{S}^1$. The result is a new genus-$g$ Lefschetz fibration $f$ on $X:=X_1\sharp_\phi X_2$ called the \textit{fiber sum} of $f_1$ and $f_2$. A Lefschetz fibration is called \textit{indecomposable} if it can not be expressed as a fiber sum. 

Stipsicz \cite{St2} showed that every Lefschetz fibration with $(-1)$-sections is indecomposable (see also \cite{Sm}). 
Note that Lefschetz fibrations with $(-1)$-sections are nonminimal. 
Under this, Stipsicz conjectured that \textit{if a Lefschetz fibration is decomposable, then it is minimal} (see Conjecture 2.3 \cite{St2})
This was proved by Usher \cite{U} (see also \cite{Sato0,B}). 
Our main result is the following. 
\begin{thm}\label{thm:1}
There exists an indecomposable minimal genus-$2$ Lefschetz fibration over $\mathbb{S}^2$, i.e., the converse of Stipsicz's conjecture is false. 
\end{thm}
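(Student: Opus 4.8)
The plan is to produce an \emph{explicit} positive relator $W = t_{c_1}t_{c_2}\cdots t_{c_n} = 1$ in $\mathrm{MCG}(\Sigma_2)$ — a product of right-handed Dehn twists equal to the identity — whose associated genus-$2$ Lefschetz fibration $f_W\colon X_W\to\mathbb{S}^2$ has both (a) minimal total space and (b) too few critical points to split as a fiber sum. I would search for such a word by substitution techniques: start from a standard genus-$2$ relator (a hyperelliptic word, or Matsumoto's word) and repeatedly apply \emph{lantern substitutions} — each replacing four boundary twists by three interior twists, hence removing one critical point and blowing down one exceptional sphere of the total space (Endo--Gurtas) — so as to simultaneously drive down the number of singular fibers and clear out all exceptional spheres. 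Having fixed $W$, one checks $W = 1$ by a direct computation in the hyperelliptic (Birman--Hilden) presentation of $\mathrm{MCG}(\Sigma_2)$, records the vanishing cycles, and reads off the invariants of $X_W$: with $p$ non-separating and $q$ separating vanishing cycles one has $e(X_W) = p+q-4$ and, since every genus-$2$ fibration is hyperelliptic, $\sigma(X_W) = -\tfrac15(3p+q)$ (Endo, Matsumoto); computing the monodromy action on $H_1(\Sigma_2)$ pins down $b_1$, hence the homeomorphism type of $X_W$. For instance, a triple $(n,p,q)=(13,6,7)$ is consistent with all constraints (including $p+2q\equiv 0 \bmod 10$ in $H_1(\mathrm{MCG}(\Sigma_2))=\mathbb{Z}/10$) and would give $X_W$ homeomorphic to $\mathbb{CP}^2\#6\overline{\mathbb{CP}^2}$, so that minimality would force it to be an exotic copy.

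Granting the construction, indecomposability follows by casework on the vanishing-cycle data. If $f_W$ were a nontrivial fiber sum $f_1\#_\phi f_2$, then after Hurwitz moves and a global conjugation $W$ factors as a concatenation $W = W_1W_2$ of two non-empty positive sub-relators, one per summand; since the gluing diffeomorphism carries separating curves to separating curves, the triple splits additively, $(n,p,q) = (n_1+n_2,\,p_1+p_2,\,q_1+q_2)$, and each $(n_i,p_i,q_i)$ is realized by a nontrivial genus-$2$ Lefschetz fibration over $\mathbb{S}^2$ (Kas, Matsumoto). One then lists all such splittings of the (small) triple of $f_W$ and excludes each. Two obstructions suffice: a separating twist acts trivially on $H_1(\Sigma_2)$, so a sub-relator with $p_i$ non-separating twists yields a relation among $p_i$ positive transvections in $\mathrm{Sp}(4,\mathbb{Z})$, which is impossible for $p_i\in\{1,2\}$; and a sub-relator with $p_i = 0$ would be a nontrivial positive relation among separating twists alone in the (free) Torelli group $\mathcal{I}_2$, which one rules out by a positivity/abelianization argument. (In the $(6,7)$ case the only non-trivial splittings are $\{(0,5),(6,2)\}$ and $\{(2,4),(4,3)\}$, and the first is killed by the $\mathcal{I}_2$ argument, the second by the $\mathrm{Sp}(4,\mathbb{Z})$ argument.) Equivalently, $n$ is smaller than twice the minimal number $\mu$ of critical points of any nontrivial genus-$2$ Lefschetz fibration over $\mathbb{S}^2$, so no decomposition $n = n_1+n_2$ with $n_1,n_2\ge\mu$ exists.

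Minimality of $X_W$ is the heart of the matter, and I expect it to be the main obstacle. Since $X_W$ is symplectic (Gompf), it is enough to rule out symplectic $(-1)$-spheres. If the construction lands $X_W$ on a minimal complex surface — say a minimal surface of general type with $p_g=q=0$ carrying a genus-$2$ pencil — this is immediate from surface theory ($K_{X_W}$ nef, $K_{X_W}^2>0$). Otherwise one argues through Seiberg--Witten theory, computing the basic classes of $X_W$ from Taubes' nonvanishing at the canonical class together with the pencil/fiber-sum structure, and — in the $b^+=1$ case — the wall-crossing formula, and checking incompatibility with any blow-up decomposition $X_W\cong Z\#\overline{\mathbb{CP}^2}$. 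The tension is structural: the casework forces $n$, hence $e(X_W)=n-4$, to be small, which forces $b^+(X_W)=1$, precisely the regime in which Seiberg--Witten invariants are chamber-dependent and an exceptional sphere of square $-1$ is invisible to $e$ and $\sigma$; so excluding every symplectic $(-1)$-sphere demands a careful, case-sensitive argument rather than a formal one. A secondary, purely mechanical difficulty is certifying the explicit word — that it equals the identity, realizes the claimed $X_W$, and has no admissible fiber-sum splitting of its vanishing-cycle triple — which should be cross-checked against the signature formula above. Combining (a) and (b) produces a minimal indecomposable genus-$2$ Lefschetz fibration, disproving the converse of Stipsicz's conjecture.
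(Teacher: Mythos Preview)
Your plan is coherent, but it is explicitly incomplete at the two places that matter: you never actually produce the relator (``I would search\ldots'', ``Granting the construction\ldots''), and you leave minimality as an acknowledged obstacle. Both gaps are real. No explicit genus-$2$ relator of type $(6,7)$ is exhibited, and the $b_2^+=1$ Seiberg--Witten wall-crossing analysis you sketch is exactly the hard work that would remain. Your Torelli argument for ruling out a type $(0,5)$ summand is also shaky as stated: $\mathcal{I}_2$ is indeed free (Mess), but separating Dehn twists are not part of a free basis, so ``positivity'' in a free group has no obvious meaning and a bare abelianization argument does not immediately exclude a relation among five of them. As written, this is a strategy, not a proof.

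The paper's argument is genuinely different and sidesteps both difficulties by a non-constructive \emph{descent}. Rather than build a small indecomposable fibration directly, one constructs a fibration of type $(14,13)$ --- obtained from a known $(4,3)$ relator by a doubling trick and a single lantern substitution, so this step is short and explicit --- and then argues: either this $(14,13)$ is already indecomposable, or any fiber-sum splitting forces one summand to have type $(6,7)$, $(8,11)$, or $(10,10)$, and in the last case a further splitting again forces $(6,7)$. The indecomposability input is packaged as a single inequality: every genus-$2$ fibration of type $(n,s)$ satisfies $s\le 2n-5$ (combine $n+2s\equiv 0\pmod{10}$ with the Baykur--Korkmaz bound $2n-s\ge 3$), and a fibration on the extremal line $s=2n-5$ cannot split because $s_1+s_2\le(2n_1-5)+(2n_2-5)=2n-10<2n-5$. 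This replaces your case-by-case $\mathrm{Sp}(4,\mathbb{Z})$/Torelli analysis with one line and handles $(6,7)$ and $(8,11)$ simultaneously. Minimality is handled by quoting Sato's classification of non-minimal genus-$2$ fibrations (which already encapsulates the Seiberg--Witten work you anticipated); for type $(6,7)$, where Sato leaves the $b_2^+=1$ window open, one finishes with the elementary count that $s=7$ separating fibers force $b_2^-\ge 7$, while $\sigma=-5$ and $b_2^+=1$ would give $b_2^-=6$. The point is that the paper never needs to exhibit a $(6,7)$ relator: its existence is hypothetical, arising only as a putative summand, and in every branch of the case analysis an indecomposable minimal example is produced.
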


\section{Positive factorizations and Proofs}
For a genus-$g$ Lefschetz fibration, any fiber containing a critical point is called \textit{singular fiber}, which is obtained by collapsing a simple closed curve, called the \textit{vanishing cycle}, in the regular fiber to a point. 
We call a singular fiber \textit{separating} (resp. \textit{nonseparating}) if the corresponding vanishing cycle is separating (resp. nonseparating) curve on the regular fiber.

Let $\CM_g$ be the mapping class group of $\Sigma_g$, which is the group of isotopy classes of orientation-preserving diffeomorphisms of $\Sigma_g$. 
A genus-$g$ Lefschetz fibration over $\mathbb{S}^2$ is determined by its monodromy representation $\pi_1(\mathbb{S}^2-f(C)) \to \CM_g$, where $C$ is the set of critical points. 
The monodromy of a genus-$g$ Lefschetz fibration $f:X\to \mathbb{S}^2$ comprises a factorization of $\mathrm{id} \in \CM_g$, called a \textit{positive factorization}, as \[t_{v_1}t_{v_2}\cdots t_{v_m} = \mathrm{id},\] where $v_1,\ldots,v_m$ are the vanishing cycles of the singular fibers and $t_{v_i}$ is the right handed Dehn twist along $v_i$. 
Conversely, the above positive factorization in $\CM_g$ gives a genus-$g$ Lefschetz fibration over $\mathbb{S}^2$ with vanishing cycles $v_1,\ldots,v_m$. 

In this article, we focus on genus-$2$ Lefschetz fibrations over $\mathbb{S}^2$. 
For abbreviation, a genus-$2$ Lefschetz fibration $f:X\to \mathbb{S}^2$ is called \textit{of type} $(n,s)$ if $f$ has $n$ nonseparating and $s$ separating singular fibers. 
Note that if $f$ of type $(n,s)$ is a fiber sum of $f_1$ of type $(n_1,s_1)$ and $f_2$ of type $(n_2,s_2)$, then we have $(n,s)=(n_1+n_2,s_1+s_2)$. 

\begin{lem}\label{lem:1}
For a genus-$2$ Lefschetz fibration $X\to \mathbb{S}^2$ of type $(n,s)$, the pair $(n,s)$ satisfies the followings:
\begin{itemize}\rm
\item $n+2s \equiv 0 \pmod{10}$ (see Section 5 \cite{Ma}),
\item $2n-5\geq s$,
\item If $n+2s=10$, then $s\geq 2$ (see Remark 5.1 \cite{Sato1}).
\end{itemize}
\end{lem}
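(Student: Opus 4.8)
The plan is to handle the three items separately, since only the middle one requires real work. Items (1) and (3) I would merely recall: (1) is the abelianization identity $H_1(\CM_2;\Z)\cong\Z/10$, under which the class of a nonseparating Dehn twist is a generator while the separating Dehn twist $t_\delta$ is twice a generator (e.g. because $t_\delta=(t_{c_1}t_{c_2}t_{c_3})^4$ for a chain $c_1,c_2,c_3$ filling a once-holed torus, hence maps to $12\equiv 2$), so abelianizing $t_{v_1}\cdots t_{v_m}=\mathrm{id}$ yields $n+2s\equiv 0\pmod{10}$, as in \cite[\S5]{Ma}; and (3) is \cite[Remark 5.1]{Sato1}, which uses that for $n+2s=10$ one has $b^+(X)=b_1(X)-1$ from the computations recalled next, so $b_1(X)\ge 2$, after which a direct inspection of genus-two positive factorizations of length ten excludes $s\le 1$.

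For item (2) I would first record two computations. Since every genus-two Lefschetz fibration is hyperelliptic, Endo's signature formula gives $\sigma(X)=-\tfrac15(3n+s)$, and of course $e(X)=n+s-4$; combining these with $b_0=b_4=1$ and $b_1=b_3$ one gets $b^{-}(X)=\tfrac15(4n+3s)+b_1(X)-3$. The geometric input is that each separating vanishing cycle contributes a negative class: if $\delta_j$ is such a cycle it bounds a once-holed torus $P_j$ inside a regular fiber, and $S_j:=P_j\cup(\text{Lefschetz thimble of }\delta_j)$ is an embedded torus with $[S_j]^2=-1$, the thimble being the core of a $2$-handle attached along $\delta_j$ with framing $-1$ relative to the fiber framing, which is precisely the framing $P_j$ induces on $\delta_j$. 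Choosing the regular fibers used for different $\delta_j$ to be distinct makes $S_1,\dots,S_s$ pairwise disjoint, so their intersection matrix is $-I_s$, and therefore $b^{-}(X)\ge s$.

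Next I would pin down $b_1(X)$, using that $H_1(X;\Q)$ is $H_1(\Sigma_2;\Q)$ modulo the span of the vanishing-cycle classes (a separating vanishing cycle is null-homologous, a nonseparating one primitive). If $n=0$ then $b^{-}(X)\ge s$ together with $e(X)=s-4$, $\sigma(X)=-\tfrac{s}{5}$ and $b_1(X)\le 4$ force $s=0$, i.e. the fibration is trivial; so we may assume $n\ge1$, whence $b_1(X)\le 3$, and $b_1(X)=3$ would make every nonseparating vanishing cycle homologous to $\pm a$ for one primitive class $a$, so every $t_{v_i}$ would act on $H_1(\Sigma_2)$ as the same transvection $T_a$ and the monodromy as $T_a^{\,n}\ne\mathrm{id}$, a contradiction; hence $b_1(X)\le 2$. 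Feeding this into $b^{-}(X)=\tfrac15(4n+3s)+b_1(X)-3\ge s$ gives $4n+3s+5b_1(X)-15\ge 5s$, i.e. $2s\le 4n+5b_1(X)-15\le 4n-5$, which sharpens (parity of $2s$) to $2s\le 4n-6$, i.e. $s\le 2n-3$; and writing $n+2s=10k$ from item (1), the inequality $s\le 2n-3$ is equivalent to $s\le 4k-1$, which is equivalent to $s\le 2n-5$. The delicate points I would expect are (a) the self-intersection computation $[S_j]^2=-1$, so that the $S_j$ really do span $s$ independent negative directions, and (b) establishing $b_1(X)\le 2$ (including ruling out fibrations with only separating vanishing cycles); the rest is routine bookkeeping, and it is pleasant that the congruence $n+2s\equiv 0\pmod{10}$ is exactly what upgrades the easily obtained $s\le 2n-3$ to the asserted $s\le 2n-5$.
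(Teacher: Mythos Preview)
Your argument for item~(2) is correct but takes a more self-contained route than the paper. The paper simply quotes Lemma~5 of \cite{BaykurKorkmaz} for $2n-s\ge 3$ and then, exactly as you do at the end, feeds in the congruence $n+2s=10k$ to upgrade $s\le 2n-3$ to $s\le 4k-1$, equivalently $s\le 2n-5$. You instead derive the intermediate bound $s\le 2n-3$ from scratch, combining Matsumoto's signature formula with the two geometric inputs $b_2^-(X)\ge s$ (from the $(-1)$-tori in the separating singular fibers---essentially the content of the remark following the proof of Theorem~\ref{thm:1}) and $b_1(X)\le 2$ (via your transvection argument on $H_1(\Sigma_2)$). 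The gain is that your version is independent of the Baykur--Korkmaz citation and makes transparent where the inequality originates; the paper's version is a two-line black box. The paper does acknowledge parenthetically that the inequality ``can also be obtained from Theorem~\ref{thm:3} below and Corollary~9 in \cite{O}'', which is a route in the same spirit as yours. For items~(1) and~(3) you and the paper proceed identically, by citation.
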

\begin{proof}
We only prove the second inequality. 
In Lemma 5 of \cite{BaykurKorkmaz}, it was shown that $2n-s\geq 3$. 
When we set $n+2s=10k$ (see the first equality), we have $20k-5s\geq 3$, so $4k-s\geq 3/5$. 
Since $k$ and $s$ are integers, we get $4k-s\geq 1$ or equivalently $2n-s \geq 5$ (This inequality can also be obtained from Theorem~\ref{thm:3} below and Corollary 9 in \cite{O}). 
\end{proof}

\begin{prop}\label{prop:1}
A genus-$2$ Lefschetz fibration of type $(n,2n-5)$ is indecomposable. 
\end{prop}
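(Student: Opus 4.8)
The plan is to argue by contradiction, exploiting the additivity of the type under fiber sum together with the inequality $2n-s\ge 5$ recorded as the second bullet of Lemma~\ref{lem:1}. Suppose $f$ of type $(n,2n-5)$ is a fiber sum of a genus-$2$ Lefschetz fibration $f_1$ of type $(n_1,s_1)$ and a genus-$2$ Lefschetz fibration $f_2$ of type $(n_2,s_2)$. As noted just before Lemma~\ref{lem:1}, the types add, so $n_1+n_2=n$ and $s_1+s_2=2n-5$.

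First I would observe that in a genuine fiber-sum decomposition each $f_i$ has at least one singular fiber: on the level of positive factorizations, decomposing means splitting $t_{v_1}\cdots t_{v_m}=\mathrm{id}$ into two \emph{nonempty} sub-factorizations, and a summand with empty critical set is a trivial surface bundle $\Sigma_2\times\mathbb{S}^2$, which one excludes by convention (otherwise every fibration would be decomposable). Hence the second bullet of Lemma~\ref{lem:1} applies to $f_1$ and to $f_2$ separately, so $2n_1-s_1\ge 5$ and $2n_2-s_2\ge 5$. Adding these and substituting the additivity relations gives
\[
10 \;\le\; (2n_1-s_1)+(2n_2-s_2) \;=\; 2(n_1+n_2)-(s_1+s_2) \;=\; 2n-(2n-5) \;=\; 5,
\]
which is absurd. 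Therefore $f$ admits no such decomposition, i.e., it is indecomposable.

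The only step that needs care — the \emph{main obstacle}, such as it is — is the first one: one must be sure that each summand of an honest fiber-sum decomposition carries a singular fiber, so that the inequality of Lemma~\ref{lem:1} (whose proof presupposes a nonempty critical set) may legitimately be invoked for $f_1$ and $f_2$ individually. Conceptually, type $(n,2n-5)$ sits exactly on the boundary $2n-s=5$ of the range allowed by Lemma~\ref{lem:1}, and the ``defect'' $D:=2n-s-5\ge 0$ satisfies $D(f)=D(f_1)+D(f_2)+5$ under fiber sum; thus a defect-zero fibration cannot split as a fiber sum of two fibrations of nonnegative defect.
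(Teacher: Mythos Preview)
Your argument is correct and coincides with the paper's own proof: assume a decomposition, add the inequalities $s_i\le 2n_i-5$ from Lemma~\ref{lem:1} for the two summands, and derive $2n-5\le 2n-10$. Your extra paragraph justifying that each summand carries at least one singular fiber (so that Lemma~\ref{lem:1} applies to it) is a point the paper leaves implicit; it is a reasonable clarification but not a departure in method.
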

\begin{proof}
Suppose that a genus-$2$ Lefschetz fibration $f$ of type $(n,2n-5)$ is a fiber sum of $f_1$ and $f_2$, where $f_i$ is of type $(n_i,s_i)$ for $i=1,2$. 
We see that $(n,2n-5)=(n_1+n_2, s_1+s_2)$. 
By the second inequality in Lemma~\ref{lem:1}, we have $s_i\leq 2n_i-5$. 
This gives 
\begin{align*}
2n-5=s_1+s_2 \leq 2(n_1+n_2)-10 = 2n-10,
\end{align*}
a contradiction. This finishes the proof. 
\end{proof}

The following theorem is a rough version of the result given by Sato \cite{Sato1}. 
\begin{thm}[Theorem 5.1 \cite{Sato1}]\label{thm:2}
Suppose that a genus-$2$ Lefschetz fibration of type $(n,s)$ is non-minimal. Then, the following holds:
\begin{itemize}
\item If $b_2^+>1$, then the possible pairs $(n,s)$ are $(14, 3)$, $(16,2)$, $(28, 1)$, $(30, 0)$ and $(40, 0)$, 
\item If $b_2^+=1$, then $(n,s)$ satisfies either $n+2s=10$ or $n+2s=20$. 
\end{itemize}
\end{thm}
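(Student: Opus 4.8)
The plan is to translate the problem into the characteristic numbers of the total space, settle the $b_2^+=1$ case by a numerical identity, and treat $b_2^+>1$ via Seiberg--Witten theory on the minimal model.

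First I would record the invariants of $X$ in terms of $(n,s)$. Counting singular fibres gives $e(X)=n+s-4$, and Endo's hyperelliptic signature formula (every element of $\CM_2$ is hyperelliptic) gives $\sigma(X)=-\tfrac15(3n+s)$. Hence $\chi_h(X)=\tfrac14\bigl(e(X)+\sigma(X)\bigr)=\tfrac{1}{10}(n+2s)-1$ and $c_1^2(X)=2e(X)+3\sigma(X)=\tfrac15(n+7s)-8=2\chi_h(X)+s-6$. From $4\chi_h=e+\sigma$ one also gets, for any closed symplectic $4$-manifold, the identity $b_2^+=2\chi_h-1+b_1$. Finally every genus-$2$ Lefschetz fibration has $n\ge 1$ (otherwise the inequality $2n-s\ge 3$ fails), so $X$ has a nonseparating vanishing cycle, $\pi_1(X)$ is a quotient of $\Z^2$, and $b_1(X)\le 2$.

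The case $b_2^+=1$ is now immediate, and needs no minimality hypothesis: $1=b_2^+=2\chi_h-1+b_1$ with $b_1\le 2$ forces $(b_1,\chi_h)\in\{(0,1),(2,0)\}$, i.e.\ $n+2s\in\{20,10\}$. For $b_2^+>1$ the same identity forces $\chi_h(X)\ge 1$, so $n+2s\ge 20$. Assume $X$ is non-minimal and write $X\cong X_{\min}\#\,m\,\CPb$ with $m\ge 1$; since blow-ups preserve $b_2^+$, $X_{\min}$ is minimal with $b_2^+>1$, so by Taubes it has nontrivial Seiberg--Witten invariant, $\pm K_{X_{\min}}$ are basic classes, and $c_1^2(X_{\min})\ge 0$. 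To bound $m$, pick disjoint exceptional spheres $E_1,\dots,E_m$ realizing the minimal model; relative minimality forbids a $(-1)$-sphere inside a fibre, so $m_i:=E_i\cdot F\ge 1$. By the blow-up formula, $\pi^*K_{X_{\min}}\pm E_1\pm\cdots\pm E_m$ are Seiberg--Witten basic classes of $X$, so the adjunction inequality applied to the fibre (with $F^2=0$, $2g(F)-2=2$, and $[F]\ne 0$ as $K_X\cdot F=2$) gives $|\beta\cdot F|\le 2$ for each of them; taking $\beta=\pi^*K_{X_{\min}}-\sum_i E_i$ and using $K_X\cdot F=2$ one obtains $\bigl|2-2\sum_i m_i\bigr|\le 2$, hence $\sum_i m_i\le 2$. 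In particular $m\le 2$ and $c_1^2(X)=c_1^2(X_{\min})-m\ge -2$, i.e.\ $n+7s\ge 30$.

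Together with $n+2s\equiv 0\pmod{10}$ and $s\le 2n-5$ (Lemma~\ref{lem:1}) this still leaves infinitely many numerical candidates — for instance $(24,3)$ — and the missing step, a uniform upper bound on $n+2s$ and on $s$, is the one I expect to be hard. Two structural features should drive it. First, since $m\le 2$ and each $m_i\le 2$, blowing down the $E_i$ equips $X_{\min}$ with a genus-$2$ Lefschetz pencil with at most two base points. Second, the $j$-th separating singular fibre of $f$ is a union of two embedded tori of self-intersection $-1$ on which $K_X$ evaluates to $1$, and these $2s$ tori are disjoint across distinct singular fibres. I would apply Seiberg--Witten obstructions (the adjunction inequality, together with wall-crossing when $b_2^+(X_{\min})$ is small) to these $(-1)$-tori and to the exceptional spheres, in the spirit of Sato's analysis of negative-square surfaces in genus-$2$ fibrations, and combine this with the sharpened signature bounds for genus-$2$ fibrations (Ozbagci's Corollary~9 in \cite{O} and Theorem~\ref{thm:3}) to bound $s$; once $s$ and $\chi_h(X_{\min})=\chi_h(X)$ are bounded, the finitely many survivors are cut down by $c_1^2(X_{\min})\ge 0$, by the identity $b_2^+(X_{\min})=2\chi_h(X_{\min})-1+b_1(X_{\min})$ with $b_1\le 2$, and by known realizability constraints, leaving exactly $(14,3)$, $(16,2)$, $(28,1)$, $(30,0)$, $(40,0)$. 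The honest summary is that the Chern-number bookkeeping and the bound $m\le 2$ are routine, while controlling the separating fibres of a non-minimal genus-$2$ fibration is the real work.
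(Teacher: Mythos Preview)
This theorem is not proved in the paper: it is quoted verbatim as Theorem~5.1 of Sato~\cite{Sato1} and used as a black box, with no argument supplied. There is therefore no ``paper's own proof'' to compare against.

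On your attempt itself: the numerics and the $b_2^+=1$ case are essentially fine, although your claim that a single nonseparating vanishing cycle forces $\pi_1(X)$ to be a quotient of $\Z^2$ is incorrect --- killing one nonseparating curve in $\pi_1(\Sigma_2)$ leaves a group whose abelianization has rank~$3$, so you only get $b_1\le 3$ that way; what actually pins down $b_1\in\{0,2\}$ when $b_2^+=1$ is the parity constraint $b_2^+\equiv b_1+1\pmod 2$ coming from $\chi_h\in\Z$. The bound $m\le 2$ on the number of exceptional spheres via adjunction against the fibre class is correct and standard. But, as you yourself acknowledge, the decisive step in the $b_2^+>1$ case --- producing a uniform upper bound on $n+2s$ and on $s$ --- is left as a plan (``in the spirit of Sato's analysis'') rather than an argument. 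That step is exactly the substance of Sato's theorem and requires a detailed case analysis of how the exceptional spheres meet the singular fibres; your outline points in the right direction but is not a proof.
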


We present a signature formula for genus-$2$ Lefschetz fibrations given by Matsumoto \cite{Ma}, which was generalized by Endo \cite{E} to genus-$g$ hyperelliptic Lefschetz fibrations. 
\begin{thm}[\cite{Ma}]\label{thm:3}
Let $f:X\to  \mathbb{S}^2$ be a genus-$2$ Lefschetz fibration of type $(n,s)$. Then the signature $\sigma(X)$ of $X$ is 
\begin{align*}
\sigma(X) = -(3n+s)/5. 
\end{align*}
\end{thm}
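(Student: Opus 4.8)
The plan is to use that every genus-$2$ Lefschetz fibration is hyperelliptic --- the hyperelliptic involution $\iota$ of $\Sigma_2$ is central in $\CM_2$ --- to present $X$, after blow-ups if necessary, as a double branched cover of a rational surface, and then read off $\sigma(X)$ from Hirzebruch's signature formula for double branched covers. Concretely, each vanishing cycle $v_i$ may be isotoped to be $\iota$-invariant, and since the Lefschetz local model $(z_1,z_2)\mapsto z_1z_2$ near a critical point admits an $\iota$-equivariant form compatible with an $\iota$-symmetric vanishing cycle, the fibrewise hyperelliptic involution extends to a smooth involution $\widetilde{\iota}$ of $X$. The quotient $Z=X/\widetilde{\iota}$ fibres over $\mathbb{S}^2$ with generic fibre $\Sigma_2/\iota\cong\mathbb{S}^2$, so $Z$ is a Hirzebruch surface $N_k$ blown up at finitely many points; the image $B$ of the fixed locus of $\widetilde{\iota}$ is a curve meeting each generic fibre in the $6$ Weierstrass points, and $X$ is the double cover of $Z$ branched along $B$.

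The crux is the dictionary between the singular fibres of $X$ and the geometry of $(Z,B)$, read off from the local model of the $6$-branch-point cover $\Sigma_2\to\mathbb{S}^2$. A \emph{nonseparating} vanishing cycle corresponds to two branch points colliding: the fibre of $Z$ stays a smooth $\mathbb{S}^2$ and $B$ gains a simple ramification point over the base. A \emph{separating} vanishing cycle, which splits the $6$ Weierstrass points into two triples, corresponds to the base $\mathbb{S}^2$ of the cover breaking into two spheres: $Z$ acquires a reducible fibre (equivalently $N_k$ is blown up once) and $B$ is modified near it. Thus the number of blow-ups and the class $[B]$ --- hence $\sigma(X)$ --- depend only on the type $(n,s)$.

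It remains to compute. Hirzebruch's formula gives $\sigma(X)=2\sigma(Z)-\tfrac{1}{2}[B]^2$ (for the smooth model of the cover when $B$ is nodal); here $\sigma(N_k)=0$ and each blow-up lowers the signature by $1$, so $\sigma(Z)$ equals minus the number of blow-ups, while $[B]^2$ is pinned down by adjunction together with the Riemann--Hurwitz count $n=2g(B)+10$ for $B\to\mathbb{S}^2$ and the identity $e(X)=-4+n+s$. When $s=0$ one has $Z=N_k$, $\sigma(Z)=0$, and a brief adjunction computation gives $\sigma(X)=-\tfrac{1}{2}[B]^2=-\tfrac{3n}{5}$, consistent with the congruence $n+2s\equiv 0\pmod{10}$ of Lemma~\ref{lem:1}; incorporating the separating fibres produces the extra $-\tfrac{s}{5}$, so $\sigma(X)=-(3n+s)/5$. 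This is additive under fibre sums, as it must be by Novikov additivity (the removed $\Sigma_2\times D^2$ has signature $0$) and the additivity of the type, and it matches known examples, e.g.\ Matsumoto's genus-$2$ fibration of type $(6,2)$ on $T^2\times\mathbb{S}^2\#4\CPb$, for which $\sigma=-4=-(18+2)/5$.

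The main obstacle is the branched-cover dictionary for separating vanishing cycles: one must determine exactly how $B$ is modified near a reducible fibre of $Z$ --- in particular whether further blow-ups are needed so that $X$ itself, not merely a blow-up of $X$, is a double branched cover --- and check that the resulting correction to $2\sigma(Z)-\tfrac{1}{2}[B]^2$ is exactly $-s/5$ regardless of the monodromy factorisation. A route avoiding the equivariant geometry altogether is Meyer's signature cocycle: removing neighbourhoods of the $n+s$ singular fibres presents the complement as a $\Sigma_2$-bundle over a punctured sphere whose boundary monodromies $t_{v_1},\dots,t_{v_{n+s}}$ multiply to the identity, Meyer's theorem computes that bundle's signature as a sum of cocycle values that depend only on which $v_i$ are separating, and one adds back the signatures of the singular-fibre neighbourhoods; there the subtle point is the Wall non-additivity correction from regluing along the non-simply-connected mapping tori, which is precisely what Endo's hyperelliptic refinement of the Meyer cocycle (\cite{E}) controls.
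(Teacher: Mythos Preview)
The paper gives no proof of this theorem at all: it is quoted as Matsumoto's result \cite{Ma} (with Endo's hyperelliptic generalisation \cite{E} mentioned) and used as a black box. So there is nothing in the paper to compare your argument against line by line.

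That said, your plan is not a different route but essentially a sketch of Matsumoto's own argument. The hyperelliptic quotient $X\to Z$, the identification of $Z$ with a blown-up Hirzebruch surface, the dictionary ``nonseparating $\leftrightarrow$ simple ramification of $B$, separating $\leftrightarrow$ reducible fibre of $Z$'', and the use of the branched-cover signature formula $\sigma(X)=2\sigma(Z)-\tfrac12[B]^2$ are exactly the ingredients of \cite{Ma}. Your $s=0$ computation is correct: writing $[B]=6C+af$ in $N_k$ and using adjunction together with $g(B)=(n-10)/2$ forces $[B]^2=6n/5$, hence $\sigma(X)=-3n/5$. The alternative you mention via Meyer's cocycle is precisely Endo's approach in \cite{E}.

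Where your write-up remains a plan rather than a proof is exactly where you flag it. For separating fibres the quotient picture is genuinely more delicate: the fibre of $Z$ becomes reducible, the branch locus is no longer smooth, and one must either resolve and track the correction to $2\sigma(Z)-\tfrac12[B]^2$ or, equivalently, compute the local signature contribution of a separating neighbourhood directly. You assert the answer is $-s/5$ but do not derive it; that derivation (via a careful local model, or via the Meyer/Endo cocycle values for a separating Dehn twist) is the nontrivial content of \cite{Ma,E}. Likewise, in the cocycle approach the Wall non-additivity term is not a side issue but the whole point, and ``Endo controls it'' is a citation, not an argument. So your outline is sound and faithful to the literature, but as written it defers the one genuinely hard step to the references rather than carrying it out.
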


\begin{lem}\label{lem:3}
There is a genus-$2$ Lefschetz fibration of type $(14,13)$. 
\end{lem}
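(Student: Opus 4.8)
The plan is to exhibit an explicit positive factorization of the identity in $\CM_2$ consisting of $14$ Dehn twists along nonseparating curves and $13$ Dehn twists along separating curves. The natural starting point is the chain relation in $\CM_2$: if $c_1,\dots,c_5$ are the standard chain of nonseparating curves on $\Sigma_2$ (with $c_i$ and $c_{i+1}$ intersecting once, all other pairs disjoint), then $(t_{c_1}t_{c_2}t_{c_3}t_{c_4}t_{c_5})^6 = \mathrm{id}$, which is Matsumoto's genus-$2$ fibration of type $(30,0)$, and also $(t_{c_1}t_{c_2}t_{c_3}t_{c_4})^{10} = \mathrm{id}$ of type $(40,0)$. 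Neither of these has separating vanishing cycles, so the first task is to introduce separating twists. The key tool here is the lantern relation (or, equivalently, the chain relation of even length $(t_{c_1}t_{c_2}t_{c_3}t_{c_4})^{10}=\mathrm{id}$ versus $(t_{c_1}\cdots t_{c_5})^6 = t_{\sigma}$ type relations), which lets one trade a cluster of nonseparating twists for a separating twist plus fewer nonseparating twists, or vice versa. Concretely, I would look for a known word for the identity in $\CM_2$ and then apply lantern substitutions repeatedly to drive the separating count up to $13$ while keeping the nonseparating count at $14$, consistent with $n+2s = 14 + 26 = 40 \equiv 0 \pmod{10}$ and with the bound $2n-5 = 23 \geq 13 = s$ from Lemma~\ref{lem:1}.

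The cleanest route is probably to build the fibration by \emph{breaking symmetry in a known hyperelliptic relation}. Recall Matsumoto's relation $(t_{B_0}t_{B_1}t_{B_2}t_C)^2 = \mathrm{id}$ of type $(6,2)$ (six nonseparating, two separating), which is the genus-$2$ fibration on $T^2\times S^2 \# 4\CPb$. Fiber summing copies of this with itself and with the type $(30,0)$ or $(40,0)$ fibrations, and then applying lantern relations to the resulting word, gives a large family of realizable $(n,s)$. I would instead try to realize $(14,13)$ directly: starting from a word with many separating twists — for instance iterating the substitution that replaces a square of a bounding pair / chain subword by a separating twist — one can typically each step raise $s$ by $1$ and lower $n$ by $3$. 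Starting from $(n,s)=(44,?)$ or from the $(40,0)$ relation and running this in reverse (replacing a separating twist by three nonseparating ones costs $n \mapsto n+3$, $s\mapsto s-1$) is not what we want; we want to go the other direction. So I would begin from something like a fiber sum producing $(14 + 3k,\, 13 - k)$ for a convenient $k$, where the factorization is easy to write down, and then perform $k$ lantern-type substitutions to land on $(14,13)$.

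The main obstacle is combinatorial bookkeeping: lantern substitutions in $\CM_2$ require locating an honest embedded four-holed sphere whose boundary and interior curves appear (after conjugation/braiding) as consecutive vanishing cycles in the current word, and verifying that the curve types (separating vs.\ nonseparating) come out as claimed. Each substitution must be checked to preserve the product being $\mathrm{id}$, which is automatic once the sub-configuration is correctly identified, but identifying it correctly — and ensuring we can iterate enough times without the curves becoming forced to be disjoint in a way that blocks the next step — is where the real work lies. I expect the construction to succeed because the numerical constraints of Lemma~\ref{lem:1} are satisfied with room to spare and because $(14,13)$ lies comfortably inside the range where such substitution games are known to be flexible; the write-up will consist of naming the explicit curves, stating the relation they satisfy, and checking the two Dehn-twist counts.
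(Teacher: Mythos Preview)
Your proposal is a plan rather than a proof, and the plan as stated contains an arithmetic error that would block it. The substitution you invoke --- trading one separating twist for three nonseparating ones, i.e.\ $(n,s)\mapsto(n+3,s-1)$ --- does not preserve $n+2s$, so it cannot arise from any relation in $\CM_2$ (by Lemma~\ref{lem:1} every genus-$2$ positive factorization has $n+2s\equiv 0\pmod{10}$, and any local substitution must respect this). The genuine lantern substitution in $\Sigma_2$, with all four boundary curves nonseparating, trades four nonseparating twists for two nonseparating plus one separating twist, i.e.\ $(n,s)\mapsto(n-2,s+1)$; thirteen iterations from the $(40,0)$ word would indeed reach $(14,13)$, but you have not exhibited a single embedded lantern configuration in that word, let alone argued that after each substitution the curves can be rearranged (via Hurwitz moves) to set up the next one with the required separating/nonseparating types.

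The paper's construction sidesteps any such iteration. It starts from the known $(4,3)$ fibration $t_{a_1}\cdots t_{a_7}=\mathrm{id}$ with $a_1$ nonseparating, rewrites this as $(t_{a_2}\cdots t_{a_7})^2 t_{a_1}^2=\mathrm{id}$ of type $(8,6)$, and conjugates by a diffeomorphism sending $a_1$ to a disjoint nonseparating curve $b_1$ to obtain a second $(8,6)$ word $t_{b_1}^2(t_{b_2}\cdots t_{b_7})^2=\mathrm{id}$. Concatenating the two yields a $(16,12)$ word in which $t_{a_1}^2 t_{b_1}^2$ appears as a consecutive block of four twists along two disjoint nonseparating curves; a \emph{single} lantern substitution on this block (the four-holed sphere has boundary $a_1,a_1,b_1,b_1$, and its three interior lantern curves consist of two nonseparating and one separating curve on $\Sigma_2$) produces type $(14,13)$. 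The idea you are missing is this squaring-and-conjugating trick, which manufactures exactly the lantern configuration needed and reduces the whole construction to one substitution instead of thirteen.
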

\begin{proof}
Let us consider a genus-$2$ Lefschetz fibration $f$ of type $(4,3)$ with a positive factorization $t_{a_1}t_{a_2}t_{a_3}t_{a_4}t_{a_5}t_{a_6}t_{a_7}=\mathrm{id}$ in $\CM_2$ (The existence of such a fibration and very explicit algebro-geometric construction is given in \cite{Xiao}. Also, the explicit monodromy of a fibration of type $(4,3)$ was presented \cite{BaykurKorkmaz}). 
By applying cyclic permutations, we may assume that $a_1$ is nonseparating. 
From the relation $t_{a_2}t_{a_3}t_{a_4}t_{a_5}t_{a_6}t_{a_7}=t_{a_1}^{-1}$, we obtain the following two positive factorizations 
\begin{align*}
(t_{a_2}t_{a_3}t_{a_4}t_{a_5}t_{a_6}t_{a_7})^2t_{a_1}^2 = \mathrm{id}, \\
t_{a_1}^2(t_{a_2}t_{a_3}t_{a_4}t_{a_5}t_{a_6}t_{a_7})^2 = \mathrm{id}. 
\end{align*}
Since $a_1$ is a nonseparating curve on $\Sigma_2$, there is a nonseparating curve $b_1$ on $\Sigma_2$ disjoint from $a_1$ and a diffeomorphism $\phi$ such that $\phi(a_1) = b_1$. 
Therefore, by simultaneous conjugation to the above second relation by $\phi$, we obtain 
\begin{align*}
t_{b_1}^2(t_{b_2}t_{b_3}t_{b_4}t_{b_5}t_{b_6}t_{b_7})^2 = \mathrm{id}, 
\end{align*}
where $b_i=\phi(a_i)$ for $i=1,2,\ldots,7$. 
From the above arguments, we get the following positive factorization 
\begin{align*}
(t_{a_2}t_{a_3}t_{a_4}t_{a_5}t_{a_6}t_{a_7})^2t_{a_1}^2t_{b_1}^2(t_{b_2}t_{b_3}t_{b_4}t_{b_5}t_{b_6}t_{b_7})^2 = \mathrm{id}. 
\end{align*}
Here, we consider a sphere $S$ with four boundary components $a,b,c,d$. 
By the lantern relation \cite{De,Jo}, there are three simple closed curves $x,y,z$ on $S$ such that
\begin{align*}
t_at_bt_ct_d=t_xt_yt_z
\end{align*}
Since the genus of $\Sigma_2$ is two, and the two nonseparating curves $a_1$ and $b_1$ are disjoint, $S$ can be embedded in $\Sigma_2$ in such a way that $a$ and $b$ are $a_1$, $c$ and $d$ are $b_1$, $x,z$ are nonseparating and $y$ is separating. 
This gives the following positive factorization
\begin{align*}
(t_{a_2}t_{a_3}t_{a_4}t_{a_5}t_{a_6}t_{a_7})^2 t_x t_y t_z (t_{b_2}t_{b_3}t_{b_4}t_{b_5}t_{b_6}t_{b_7})^2 = \mathrm{id}. 
\end{align*}
Since three of $a_2,\ldots,a_7$ (resp. $b_2,\ldots,b_7$) are nonseparating and the rest are separating curves, we obtain a genus-$2$ Lefschetz fibration of type $(14,13)$, and the proof is complete. 
\end{proof}
\begin{rmk}
The operation using the lantern relation in the above proof is called the \textit{lantern substitution}. 
In \cite{EG}, it was shown that the lantern substitution means the rational blowing down process, which was discovered in \cite{FS1}, along a $-4$-sphere. 
This was generalized in \cite{EMVHM}. 
The lantern substitution preserves the minimality of symplectic 4-manifolds (cf \cite{AM}). 
\end{rmk}

\begin{proof}[Proof of Theorem~\ref{thm:1}]
We show that there must exist any indecomposable minimal Lefschetz fibrations of types $(6,7),(8,11),(10,10)$ and $(14,13)$. 

Let us consider a genus-$2$ Lefschetz fibration of type $(14,13)$. 
Such a fibration is guaranteed to exist by Lemma~\ref{lem:3} and minimal from Theorem~\ref{thm:2}. 
If there is an indecomposable one, then it is the required fibration of Theorem~\ref{thm:1}. 
Therefore, we suppose that any genus-$2$ Lefschetz fibrations of type $(14,13)$ are a fiber sum of Lefschetz fibrations of types $(n_1,s_1)$ and $(n_2,s_2)$. 
Since then, by Lemma~\ref{lem:1}, the following pairs $(n,s)$ are not realizable:
\begin{itemize}
\item If $n+2s=10$, then $(n,s)=(0,6),(1,5),(2,4),(8,1),(10,0)$, 
\item If $n+2s=20$, then $(n,s)=(0,10),(2,9),(4,8)$, 
\item If $n+2s=30$, then $(n,s)=(0,15),(2,14),(4,13),(6,12)$, 
\end{itemize}
we see that the possible pairs $(n_i, s_i)$ are the following:
\begin{enumerate}
\item[(1)] $(n_1,s_1)=(6,7)$ and $(n_2,s_2)=(8,6)$, 
\item[(2)] $(n_1,s_1)=(8,11)$ and $(n_2,s_2)=(6,2)$, 
\item[(3)] $(n_1,s_1)=(10,10)$ and $(n_2,s_2)=(4,3)$. 
\end{enumerate}

We first look at the case (1). 
Then, a genus-$2$ Lefschetz fibration of type $(6,7)$ is indecomposable and minimal, and therefore, it is the required one of Theorem~\ref{thm:1}. 
The proof is as follows. 
The indecomposability immediately follows from Proposition~\ref{prop:1}. 
Assume that there is a non-minimal genus-$2$ Lefschetz fibration $f:X\to \mathbb{S}^2$ of type $(6,7)$. 
Then, from Theorem~\ref{thm:2}, we have $b_2^+(X)=1$. 
Since $\sigma(X) = b_2^+(X) - b_2^-(X) = -5$ by Theorem~\ref{thm:3}, we obtain $b_2^-(X)=6$. 
On the other hand, we have $b_2^-(X)\geq 7$ since every separating singular fiber contains a torus of negative self-intersection, and all of them are linearly independent in homology, a contradiction.

Next, we consider the case (2). 
By Theorem~\ref{thm:2} and Proposition~\ref{prop:1} we see that a genus-$2$ Lefschetz fibration of type $(8,11)$ is indecomposable and minimal, and it is the required fibration.

Finally, we deal the case (3). 
The minimality of a Lefschetz fibration of type $(10,10)$ follows from Theorem~\ref{thm:2}. 
If there is an indecomposable one, we obtain the claimed fibration. 
We suppose that every Lefschetz fibration of type $(10,10)$ is a fiber sum of Lefschetz fibrations of types $(n_3,s_3)$ and $(n_4,s_4)$. 
The possible pairs are $(n_3,s_3)=(6,7)$ and $(n_4,s_4)=(4,3)$ from the above non-realizable pairs $(n,s)$. 
From the case (1), we obtain the required fibration of type $(6,7)$.

This finishes the proof. 
\end{proof}
\begin{rmk}
Strictly speaking, for a genus-$g(\geq 2)$ Lefschetz fibration on $X$ over a closed surface with $s$ separating singular fibers, we have $b_2^-(X)\geq s+1$ as follows. 
Every separating singular fibers contains a surface of self-intersection $-1$. 
Since all of the surfaces are linearly independent in $H_2(X)$ and independent of the class of a smooth fiber, which has selfintersection $0$. 
\end{rmk}

The types $(4,3)$, $(6,7)$ and $(12,19)$, which satisfy $s=2n-5$, were constructed in \cite{Xiao}. 
This observation leads to the following geography question for genus-$2$ Lefschetz fibrations of type $(n,s)$. 
\begin{ques}
Given a pair of integers $(n,s)$ satisfying $n,s\geq 0$, $n+2s\equiv0 \pmod{10}$ and $s\leq 2n-5$, is there a genus-$2$ Lefschetz fibration of type $(n,s)$? 
\end{ques}
\begin{rmk}
After writing the first draft of the paper, the second author was informed by Inanc Baykur on September 19, 2018 that he also obtained the similar proof of the minimality and the indecomposability of type (6,7) and that his former student Kai Nakamura has studied the geography of genus two Lefschetz fibrations and produced some examples of new Lefschetz fibrations in his undergraduate thesis. 
We have not seen Nakamura's work, but according to Baykur, Kai seems to give an example of Lefschetz fibrations of type $(10,10)$. 
Lefschetz fibrations of type $(10,10)$ can also be obtained using the methods of 
\cite{AMon,AM}, by applying two lantern substitution to a word obtained from a twisted fiber sum of  Lefschetz fibrations of types $(8,6)$ and $(6,2)$. 
We will present this and other interesting higher Lefschetz fibrations in our preprint \cite{AMS}, which will appear on arXiv in the near future. 
\end{rmk}

It is natural to ask the following question. 
\begin{ques}
How many indecomposable minimal genus-$g$ Lefschetz fibrations do there exist for $g\geq 2$? 
\end{ques}

\begin{rmk}
Stipsicz also conjectured that every indecomposable Lefschetz fibration has a $(-1)$-section (see Conjecture 2.4 \cite{St2}). 
There are nonminimal counterexamples to this conjecture, i.e., indecomposable nonminimal genus-$g$ Lefschetz fibrations with no $(-1)$-sections ($g=2$ \cite{Sato2}, $g=2,3$ \cite{BH} and $g\geq 2$ \cite{BHM}). 
Our result shows that a minimal counterexample exists. 
\end{rmk}

\begin{rmk}
For $g\geq 3$ and $h=1,2$ we find that genus-$g$ Lefschetz fibrations over $\Sigma_h$ constructed in \cite{KO,Hamada,SY} are indecomposable and minimal. 
The minimality follows from the result of \cite{St1}, and the indecomposability follows from the number of singular fibers and the lower bounds on the number of singular fibers of Lefschetz fibrations over $\mathbb{S}^2$ (see \cite{BK}) and $T^2$ (see \cite{SY}). 
In \cite{BKM}, it was shown that a genus-$g$ Lefschetz fibration over $\Sigma_h$ with a ``maximal section" (see \cite{BKM} for the definition) is indecomposable (as a fiber sum of two genus-$g$ Lefschetz fibrations with a section) if $h\geq 1$. 
Note that a maximal section means that a $(-1)$-section for $h=0$. 
If $g\geq 5$, then we can show that the fibrations given in \cite{KO,Hamada,SY} 
has a maximal section from the constructions of \cite{KO,SY} and Theorem 13 and the technique in Section 3.3 of \cite{BKM}. 
On the other hand, our indecomposable minimal genus-$2$ Lefschetz fibration over $\mathbb{S}^2$ does not admit any maximal sections (i.e. (-1)-sections). 
\end{rmk}

\section*{Acknowledgments} We would like to thank I. Baykur, N. Hamada and K. Yasui for their comments on the first version of this paper posted on the arXiv. 
A. Akhmedov was partially supported by Simons Research Fellowship and  Collaboration Grants for Mathematicians by Simons Foundation. N. Monden was supported by Grant-in-Aid for Young Scientists (B) (No. 16K17601), Japan Society for the Promotion of Science.

\end{document}